\numberwithin{equation}{section}
\theoremstyle{definition}
\newtheorem{definition}{Definition}
\theoremstyle{remark}
\newtheorem{remark}[definition]{Remark}
\theoremstyle{plain}
\newtheorem{theorem}[definition]{Theorem}
\newtheorem{result}[definition]{Result}
\newtheorem{lemma}[definition]{Lemma}
\newtheorem{corollary}[definition]{Corollary}
\newcommand{\bdy}{\partial}
\newcommand{\OM}{\Omega}
\newcommand{\disk}{\mathbb{D}}
\newcommand{\smoo}{\mathcal{C}}
\newcommand{\hol}{\mathscr{O}}
\newcommand{\holdom}{\mathscr{O}_\textsf{dom}}
\newcommand{\C}{\mathbb{C}}
\newcommand{\Z}{\mathbb{Z}}
\newcommand{\Ball}{\mathbb{B}}
\begin{document}
\title[Finiteness theorems for holomorphic mappings]{Finiteness theorems for
holomorphic mappings from products of hyperbolic Riemann surfaces}
\author{Divakaran Divakaran}
\address{Department of Mathematics, Institute of Mathematical Sciences, Chennai 600113, India}
\email{divakaran.divakaran@gmail.com}
\author{Jaikrishnan Janardhanan}
\address{Department of Mathematics, Indian Institute of Technology Madras, Chennai 600036, India}
\email{jaikrishnan@iitm.ac.in}

\keywords{hyperbolic manifolds, finiteness theorem, Riemann
surfaces of finite type.}
\subjclass[2000]{Primary: 32H20}

\begin{abstract}
  We prove that the space of dominant/non-constant holomorphic mappings from a
  product of hyperbolic Riemann surfaces of finite type into certain hyperbolic
  manifolds with universal cover a bounded domain is a finite set. 
  
\end{abstract}

\maketitle

\section{Introduction}
Complex analysis, in its most general setting, is the study of
holomorphic mappings between
complex spaces. Therefore, it is somewhat paradoxical that the space $\hol(X,Y)$ of 
holomorphic mappings between complex spaces $X$ and $Y$ often
comprises just constant mappings or contains only a finite/discrete set of
non-constant mappings. The classical theorems of Liouville and Picard illustrate
this phenomenon. In the context of compact Riemann surfaces, the famous Riemann--Hurwitz
formula puts severe restrictions on the comparative genera of the Riemann
surfaces $R$ and $S$ whenever $\hol_*(R,S)$ is non-empty. A somewhat deeper 
result is the following result of Imayoshi, which is an extension of a classical theorem 
due to de Franchis \cite{defranchis13}.

\begin{result}[Imayoshi {\cite[Theorem~2]{imayoshi83}}]\label{R:imayoshi}
  Let $R$ be a Riemann surface of finite type and let $S$ be a Riemann
  surface of finite type $(g,n)$ with $2g - 2 + n > 0$. Then $\hol_*(R,S)$ is a finite set.
\end{result}

A Riemann surface $S$ is said to be of \emph{finite type $(g,n)$} if it is
biholomorphic to a Riemann surface obtained by removing $n$ points from a
compact Riemann surface of genus $g$.
Observe that the condition  $2g - 2 + n > 0$
ensures that $S$ is a hyperbolic Riemann surface. This fact is crucial in the
proof. Also note that a hyperbolic Riemann surface is automatically complete
Kobayashi hyperbolic. However, aside from an appeal to Montel's
theorem for bounded domains, Kobayashi hyperbolicity plays \emph{no role} in
Imayoshi's proof. Rather, the proof relies mainly on the theory
of Fuchsian groups. It is well-known that the presence of (Kobayashi)
hyperbolicity in either the domain or target complex space
often forces the space of dominant holomorphic mappings to be
finite/discrete. See \cite{kobayashi75} and \cite{noguchi82} for examples of
such results. Also see \cite[Chapter~2]{zaldenberg89} for a detailed survey.

The following result of Shiga can be thought of as a higher dimensional
analogue of Result~\ref{R:imayoshi}.

\begin{result}[Shiga {\cite[Theorem~1.3]{shiga04}}]\label{R:shigagen}
  Let $X = \Ball^n/G$ be a complex hyperbolic manifold of divergence type
  and let $Y = \OM/\Gamma$ be a geometrically finite  $n$-dimensional complex 
  manifold where $\OM \subset \C^m$ is a bounded
  domain and $\Gamma$ is fixed-point-free discrete subgroup
  of $\textsf{Aut}(\OM)$. Suppose $G$ is finitely generated and that
  $\OM$ is complete with respect to the Kobayashi distance. Then
  $\holdom(X,Y)$ is a finite set. Furthermore, if the essential boundary
  dimension of $\OM$ is zero, then $\hol_*(X,Y)$ is a finite set.
\end{result}

\begin{remark}
  The proof of the above result uses methods inspired by the ones used by
  Imayoshi. However, as is evident from the hypotheses, Kobayashi hyperbolicity 
  plays a prominent role in the proof. 
\end{remark}

The purpose of this work is to both extend Result~\ref{R:imayoshi} and also
to clarify the role played by Kobayashi hyperbolicity. 
The central result of this paper is:

\begin{theorem}\label{T:fin} 
  Let $X := X_1 \times \dots \times X_n$ be a product of
  hyperbolic Riemann surfaces of finite type and let $Y = \OM/\Gamma$ be an
  $m$-dimensional complex manifold where $\OM \subset \C^m$ is a bounded
  domain and $\Gamma$ is fixed-point-free discrete subgroup
  of $\textsf{Aut}(\OM)$.
  \begin{enumerate}
    \item If $N$ is a tautly embedded complex submanifold of $Y$ then $\holdom
    (X,N)$ is a finite set.
    \item If $Y$ is geometrically finite and $\OM$ is complete hyperbolic then 
    $\holdom(X,Y)$ is a finite set.
    \item If in addition to the conditions in (2), the essential boundary
    dimension of $\OM$ is zero, then $\hol_*(X,Y)$ is a finite set.
  \end{enumerate}
  
\end{theorem}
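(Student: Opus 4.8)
The plan is to show, for each of the three classes of mappings, that it is simultaneously relatively compact and discrete in $\hol(X,Y)$ endowed with the compact-open topology; a discrete subset that is relatively compact and closed—hence compact and discrete—is necessarily finite. Throughout I would pass to universal covers, writing the polydisc $\disk^n$ for the cover of $X$ with deck group $G = G_1\times\cdots\times G_n$, a product of Fuchsian groups that is finitely generated because each $X_i$ has finite type, and $\OM\subset\Cm$ for the cover of $Y$ with deck group $\Gamma$. Every holomorphic $f\colon X\to Y$ then lifts to a $G$-equivariant holomorphic $\tilde f\colon\disk^n\to\OM$, equivariant with respect to a homomorphism $\rho_f\colon G\to\Gamma$. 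The compactness pillar is supplied entirely by normal-families arguments: since $\OM$ is bounded, Montel's theorem makes $\hol(\disk^n,\OM)$ a normal family, and tautness of $N$ in (1), or completeness of $\OM$ in (2) and (3), forces a sequence of lifts either to converge locally uniformly to a holomorphic map into $\OM$ or to be compactly divergent. As in Imayoshi's argument, this appeal to Montel is the only place Kobayashi hyperbolicity enters.

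For the discreteness pillar, suppose a sequence of pairwise distinct maps $f_k$ in the relevant class converges locally uniformly to $f$, with lifts chosen so that $\tilde f_k\to\tilde f$. Fixing a basepoint $z_0\in\disk^n$, the values $\tilde f_k(z_0)$ converge to $\tilde f(z_0)\in\OM$ and thus lie in a fixed compact subset of $\OM$; for each generator $\gamma$ of $G$ the points $\rho_k(\gamma)\,\tilde f_k(z_0)=\tilde f_k(\gamma z_0)$ likewise lie in a fixed compact set. Since $\Gamma$ is a discrete, hence properly discontinuous, subgroup of $\textsf{Aut}(\OM)$, only finitely many group elements carry one compact set to meet another, so each $\rho_k(\gamma)$ ranges over a finite set; passing to a subsequence, all the $f_k$ induce one and the same $\rho$ and are therefore pairwise homotopic. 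It then remains to prove the rigidity statement that two homotopic maps in the class coincide, i.e.\ that homotopic $\rho$-equivariant lifts $\tilde f_k,\tilde f_j\colon\disk^n\to\OM$ are identical. I would attempt to derive this by restricting to the one-dimensional slices of the polydisc, reducing to the classical fact that homotopic non-constant holomorphic maps between finite-type hyperbolic Riemann surfaces agree, and then globalizing by the identity theorem. This contradicts the distinctness of the $f_k$ and establishes discreteness.

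The main obstacle is the input that a sequence of distinct dominant (resp.\ non-constant) maps is \emph{not} compactly divergent and that its limit does not degenerate, which is exactly where the geometric hypotheses do their work. In (1), tautness of the embedded $N$ already forbids compact divergence inside $Y$, while dominance prevents the limit from dropping rank; in (2), completeness of $\OM$ secures the normal-family dichotomy and geometric finiteness controls the cusps and funnels of the $X_i$, ensuring that a dominant map cannot push mass into an end of $Y$ in the limit. The passage from (2) to (3) is the most delicate: on dropping dominance one must rule out that a sequence of non-constant maps collapses to a constant by escaping toward a positive-dimensional analytic subset of $\bdy\OM$, and the hypothesis that the essential boundary dimension of $\OM$ is zero is precisely the statement that no such analytic structure is present, so non-constancy persists in the limit. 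Verifying these non-degeneration statements, and with them the preservation of the defining property of each class under locally uniform limits, is the crux on which the entire finiteness argument rests.
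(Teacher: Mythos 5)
Your overall architecture (relative compactness plus discreteness, with discreteness obtained by forcing a convergent sequence of distinct maps to eventually induce a single homomorphism $G \to \Gamma$ and then invoking a rigidity statement) matches the paper's, and your proper-discontinuity argument for stabilizing the homomorphisms $\rho_k$ is a legitimate alternative to the paper's homotopy-of-loops argument on a compact deformation retract $K$. The fatal gap is the rigidity step itself. You propose to prove that two homotopic maps in the class coincide by restricting to one-dimensional slices of $\disk^n$ and quoting the classical de Franchis--Imayoshi fact that homotopic non-constant holomorphic maps between finite-type hyperbolic Riemann surfaces agree. That fact is not applicable: a slice of $f$ is a map from a Riemann surface into the $m$-dimensional manifold $Y$, not into a Riemann surface, and rigidity for such maps is false in general. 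For instance, with $Y = X_1 \times X_1$ the maps $z \mapsto (z,a)$ and $z \mapsto (z,b)$, $a \neq b$, are homotopic, non-constant, holomorphic and distinct; note also that the slices of a dominant map are never dominant (their images are one-dimensional), so dominance cannot be fed into any slice-wise statement either. This is precisely why the paper proves its rigidity result (Theorem~\ref{T:rigidity}) by a genuinely higher-dimensional route: divergence type of each $G_i$ produces sequences $g_k \in G$ converging non-tangentially to almost every $\xi \in \mathbb{T}^n$, the Fatou--Riesz theorem (Result~\ref{R:fatou}) gives boundary values of the lifts, Montel plus Cartan's theorem forces the limits $A_\xi$ of the automorphisms $\chi(g_k)$ to be either automorphisms of $\OM$ or maps into $\bdy \OM$, and then dominance (resp.\ essential boundary dimension zero together with the pluripolar uniqueness Lemma~\ref{L:uniq}) forces each $A_\xi$ to be constant, whence $\widetilde{\phi} = \widetilde{\psi}$. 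None of this machinery is recoverable from the one-variable theory, and without it your discreteness pillar has no foundation.

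The second gap is that you defer, rather than prove, the claim that a sequence of non-constant maps cannot be compactly divergent in cases (2) and (3) --- you explicitly call it ``the crux.'' That claim is the content of the paper's Lemma~\ref{L:normal}, and its proof again runs through the boundary machinery: a compactly divergent sequence would eventually map the compact core $K$ into one of the finitely many parabolic ends $V$ of $Y$ (geometric finiteness is a hypothesis on the ends of $Y$, not, as you suggest, on ``cusps and funnels of the $X_i$''), forcing the non-tangential boundary values of a lift $F$ to lie in $\overline{V'} \cap \bdy \OM$, hence in a countable union of complete pluripolar sets, so that Lemma~\ref{L:uniq} makes $F$ constant --- a contradiction. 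Finally, your concern that the limit map might ``drop rank'' or ``collapse to a constant'' is a red herring: the paper never needs the limit to remain in the class, because rigidity is applied to pairs $f_k, f_{k'}$ from the sequence itself (all dominant, resp.\ non-constant, and eventually inducing the same homomorphism on $\pi_1(X)$), not to the limit map.
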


\begin{remark}
  This theorem subsumes Result~\ref{R:imayoshi} (by Remark \ref{Rem:RSFTgeomfin}
  and (2) above) and can be viewed as an analogue
  of Result~\ref{R:shigagen} with $\Ball^n$ replaced by $\disk^n$. Our proof
  uses a combination of techniques used in the proofs of Result~\ref{R:imayoshi}
  and \ref{R:shigagen}. However, a key point of departure is our use 
  of a normal families argument (see Lemma~\ref{L:normal}) which simplifies 
  our proof. Our proof is also intentionally written in such a way that we get  
  new simplified proofs of Result~\ref{R:imayoshi} and Result~\ref{R:shigagen}
  \emph{mutatis mutandis}.

\end{remark}

\smallskip

\noindent \textbf{Notation:} We shall denote the space of holomorphic mappings
between two complex spaces $X$ and $Y$ by $\hol(X,Y)$. The space of non-constant
holomorphic mappings will be denoted by $\hol_*(X,Y)$ and dominant holomorphic
mappings by $\holdom(X,Y)$. $\disk$ shall denote the open unit disk in the
complex plane.

\smallskip

\noindent \textbf{Organization:} In Section~\ref{S:summary}, we summarize (with
references) the results and tools needed in the proof of Theorem~\ref{T:fin}.
The proof of Theorem~\ref{T:fin} forms the content of Section~\ref{S:main}.
Finally we present a few applications of our results in Section~\ref{S:app}.

\section{Summary of results and tools needed}\label{S:summary}

\subsection{Kobayashi hyperbolicity}

On each complex manifold $M$, one can define a pseudodistance $K_M$, called the
Kobayashi pseudodistance, that is
distance decreasing under holomorphic mappings, i.e., if $f:M \to N$ is a
holomorphic mapping between complex manifolds, then $K_M(x,y) \geq K_N(f(x),f
(y)) \ \forall x,y \in M$. By definition, the Kobayashi pseudodistance on
$\disk$ is nothing but the distance obtained by integrating the Poincaré
metric. See \cite{kobayashi67} for basic facts about 
the Kobayashi pseudodistance and  \cite{kobayashi1998,kobayashi05} for a
detailed exposition.  

We say that a complex manifold $M$ is \emph{hyperbolic} if $K_M$ is a distance.
If in addition $(M,K_M)$ is a complete metric
space, we say that $M$ is \emph{complete Kobayashi hyperbolic}. These notions
have been extensively studied in the literature and are indispensable in
the study of holomorphic mapping. 

The following result is often used to determine if a given complex manifold is 
(complete) hyperbolic.

\begin{result}\label{R:hypcomp}
  Let $M$ and $N$ be complex manifolds and $F:M \to N$ a covering map. Then $M$
  is (complete) hyperbolic iff $N$ is.
\end{result}

If $M$ and $N$ are hyperbolic complex manifolds then $\hol(M,N)$ is obviously an
equicontinuous family under the Kobayashi distance. It is a natural question
whether $\hol(M,N)$ is relatively compact as a subspace of $\mathcal{C}(M,N)$ in
the compact-open topology. As the compact-open topology coincides with
topology of uniform convergence on compacts, the following definition is
natural.

\begin{definition}
  Let $M$ and $N$ be complex manifolds. A subset $\mathcal{F} \subset \mathcal
  {C}(M,N)$ is said to be a \emph{normal family} if every sequence $\{f_n\}
  \subset \mathcal{F}$ has either a subsequence that converges uniformly on
  compacts to a function in $\mathcal{C}(M,N)$ or has a compactly divergent
  subsequence.

  A complex manifold $N$ is said to be \emph{taut} if for every complex
  manifold $M$ the set $\hol(M,N)$ is a normal family.
\end{definition}

The following result relates the notion of hyperbolicity and tautness.

\begin{result}[Kiernan {\cite[Proposition~3]{kiernan70}}]\label{R:taut}
  Complete hyperbolic complex manifolds are taut.
\end{result}

\subsection{Groups of divergence type}

The classical uniformization theorem states that the universal cover of a
Riemann surface is either $\C, \widehat{\C}$ or $\disk$. We say that a Riemann
surface is \emph{hyperbolic} if its universal cover is $\disk$. By 
Result~\ref{R:hypcomp} a hyperbolic Riemann surface is automatically complete
Kobayashi hyperbolic. 

If $R$ is a hyperbolic Riemann surface, we may write $R$
as $\disk/G$, where $G$ is a fixed-point free discrete subgroup of $\textsf
{Aut}(\disk)$. Such a group $G$ is knows as a \emph{Fuchsian group} and the
representation of $R$ as $\disk/G$ is known as the \emph{Fuchisan group
representation} of $R$. From the
theory of covering spaces, it also follows that $G \simeq \pi_1(R)$. If $R$ is 
a hyperbolic Riemann surface of finite type, we can get considerable information 
on the action of $G$ on $\disk$. We first need a definition.

\begin{definition}
  A Fuchsian group $G$ is said to be of \emph{divergence type} if 
  \[
    \sum_{g \in G} (1 - |g(z)|) = + \infty  \ \forall z \in \disk.
  \]
\end{definition}

\begin{result}\label{R:div}
  Let $R := \disk/G$ be a hyperbolic Riemann surface. Then the following are
  equivalent
  \begin{enumerate}
    \item $G$ is a Fuchsian group of divergence type.
    \item $R$ admits no Green's function.
    \item For almost every $\xi \in \bdy \disk$, we can find a sequence
    $g_\nu \in G$ such that $g_\nu \to \xi \ \forall z \in \disk$
    uniformly and non-tangentially on compacts.
  \end{enumerate}
\end{result}

\begin{remark}
  It follows from the above result that hyperbolic Riemann surfaces of finite
  type can be represented as $\disk/G$ where $G$ is a Fuchsuan group of
  divergence type. See \cite{tsuji75} for a proof of the above result and
  related results.
\end{remark}

\subsection{The notion of ends and essential boundary dimension}

\begin{definition}
  Let $X$ be a topological manifold. An \emph{end} of $X$ is a decreasing
  sequence of connected open sets
  \[
    U_1 \supset U_2 \supset \dots,
  \]
  with the property that given any compact set $K \subset X$, there is an $N_0
  \in \Z_+$ such that $K \cap U_{N_0} = \emptyset$. Two ends $U_1 \subset
  U_2 \subset \dots$ and $U_1' \subset U_2' \subset \dots$ are considered
  equivalent if for each $n \in \Z_+$, we can find $N \in \Z_+$ such that $U_n
  \subset U_N'$, and vice-versa.
\end{definition}

\begin{remark}
  The notion of ends can be used to construct a nice compactification of $X$.
  The space of ends also allows one to classify non-compact surfaces. See 
  \cite{richards63}.
\end{remark}

In this work, we are mainly interested in complex manifolds with finitely many
ends of a certain kind known as parabolic ends. We first define the notion of a
pluripolar set.

\begin{definition}
  A set $E \subset \C^n$ is said to be a \emph{complete pluripolar set} if for
  some plurisubharmonic function $u: \C^n \to [-\infty,\infty), E = u^{-1}
  \{-\infty\}$.
\end{definition}

We now define what it means for a manifold $Y := \OM/\Gamma$ to be geometrically
finite.

\begin{definition}

  Let $Y := \OM/\Gamma$ be a complex manifold, where $\OM \subset \C^n$ is a
  bounded domain and $\Gamma$ is a discrete and fixed-point free subgroup of
  $\textsf{Aut}(\OM)$. Denote the covering map by $\pi: \OM \to X$. We say that
  the end 
  \[
     U_1 \supset U_2 \dots,
  \] 
  is a \emph{parabolic end} if for some $N_0$, we can find a connected component of 
  $\pi^{-1}(U_0)$, say $\widetilde{U}_{N_0}$, such that $\overline{\widetilde
  {U}}_{N_0} \cap \bdy \OM \subset \bigcup_{j = 1}^\infty R_j$ where each $R_j
  $ is a complete pluripolar set in $\C^n \setminus \OM$.

  We say that $X$ is \emph{geometrically finite} if $X$ has only finitely many
  ends and each end is a parabolic end.  
\end{definition}

  \begin{remark}
    Any hyperbolic Riemann surface of finite type is automatically geometrically
    finite.
    \label{Rem:RSFTgeomfin}
  \end{remark}

  \begin{remark}
    If a manifold $X$ has only finitely many inequivalent ends then we may, and
    shall do so tacitly, represent each of these finitely many ends by finitely
    many pairwise disjoint connected open sets.
  \end{remark}

\subsection{Fatou--Riesz theorems on the polydisk}

We will need the following version of the Fatou--Riesz theorem for the polydisk.

\begin{result}[Tsuji {\cite{tsuji45}}]\label{R:fatou}
  Let $f$ be a bounded holomorphic function on $\disk^n$. Then for almost every
  point $\xi$ on the distinguished boundary (the torus $\mathbb{T}^n$)
  \[
    \lim_{z \to \xi} f(z),
  \]
  exists whenever $z \to \xi$ non-tangentially. We 
  denote this limit by
  $f^*(\xi)$. Furthermore, if $g$ is another bounded holomorphic function on
  $\disk^n$ and $f^*(\xi) = g^*(\xi)$ on a set of positive measure on $\mathbb
  {T}^n$, then $f \equiv g$.
\end{result}

The following lemma has been proven for an arbitrary bounded $\smoo^2$-smooth
domains by Shiga (see \cite[Theorem~3.1]{shiga04}). We can view it as a
generalization of the last part of the previous theorem. As the proof given by
Shiga relies only on the existence of a Poisson kernel, his proof works for the
polydisk also.
\begin{lemma}\label{L:uniq}
  Let $\phi:\disk^n \to \C^m $ be a non-constant bounded holomorphic mapping and
  let $E$ be a complete pluripolar set in $\C^m$.  If $\phi^*(\xi) \in E$
  for a set of positive measure on $\mathbb{T}^n$, then $\phi(\disk^n) \subset E$.
\end{lemma}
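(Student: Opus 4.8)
The plan is to argue by contradiction, transporting the pluripolar hypothesis on the target into a statement about a plurisubharmonic function on $\disk^n$ and then contradicting it through the mean-value / Poisson-kernel machinery. Write $E = u^{-1}\{-\infty\}$ for a plurisubharmonic $u:\Cm \to [-\infty,\infty)$, and suppose, contrary to the claim, that $\phi(\disk^n) \not\subset E$. Set $v := u \circ \phi$. Since $\phi$ is holomorphic and $u$ is plurisubharmonic, $v$ is plurisubharmonic on $\disk^n$, and the assumption $\phi(\disk^n)\not\subset E$ guarantees $v \not\equiv -\infty$. Because $\phi$ is bounded, $\phi(\disk^n)$ lies in a fixed ball, on whose closure the upper semicontinuous $u$ is bounded above; hence $v \le M$ for some finite $M$. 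Thus $v$ is a genuine plurisubharmonic function on $\disk^n$, bounded above and not identically $-\infty$.

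Next I would read off the boundary behaviour forced by the hypothesis. Let $A \subset \mathbb{T}^n$ be the positive-measure set on which $\phi^*(\xi) \in E$. By Result~\ref{R:fatou} the non-tangential (in particular radial) limit of $\phi$ exists and equals $\phi^*(\xi)$ for almost every $\xi$, so for almost every $\xi \in A$ we have $\phi(r\xi) \to \phi^*(\xi) \in E$ as $r \to 1$. Upper semicontinuity of $u$ then gives $\limsup_{r\to 1} v(r\xi) \le u(\phi^*(\xi)) = -\infty$, that is, $\lim_{r \to 1} v(r\xi) = -\infty$ for almost every $\xi \in A$. This is the step that converts the membership $\phi^*(\xi)\in E$ into a divergence statement for the subharmonic function $v$.

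The heart of the proof is to show that such divergence on a set of positive measure is impossible for $v$. Consider the circular means over the distinguished boundary, $m(r) := \int_{\mathbb{T}^n} v(r\xi)\, d\sigma(\xi)$, where $d\sigma$ is normalised Haar measure and $r\xi = (r_1\xi_1, \dots, r_n\xi_n)$. Since $v$ is subharmonic in each variable separately, $m$ is nondecreasing in each $r_j$; since $v \le M$ it is bounded above; and since $v$ is locally integrable and $\not\equiv -\infty$, Fubini shows $m(r_0) > -\infty$ for some polyradius $r_0$. Hence $m(r)$ increases to a finite limit $L$ as $r \to \mathbf{1}$ — this is precisely the finiteness of the value at the origin of the least $n$-harmonic majorant of $v$ furnished by the Poisson kernel of $\disk^n$, which is all of Shiga's argument that is needed here. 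Applying Fatou's lemma to the nonnegative functions $M - v(r\xi)$ yields $\int_{\mathbb{T}^n}\liminf_{r\to 1}(M - v(r\xi))\,d\sigma(\xi) \le M - L < \infty$. But on $A$ the integrand tends to $+\infty$, so the left-hand side is $+\infty$; this contradiction shows $\phi(\disk^n) \subset E$.

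The main obstacle is the potential-theoretic input in the third step: establishing that the iterated circular means of $v$ are monotone and, crucially, that their limit $L$ is finite rather than $-\infty$. Finiteness rests on $v \not\equiv -\infty$ together with the local integrability of plurisubharmonic functions, and it is exactly here that the existence of the Poisson kernel on the polydisk (equivalently, the existence of a finite least harmonic majorant) is used. The remaining delicate point is the legitimacy of passing from $\phi^*(\xi) \in E$ to $\lim_{r\to1} v(r\xi) = -\infty$, which requires that the almost-everywhere radial limits supplied by Result~\ref{R:fatou} be compatible with the upper semicontinuity of $u$; once these two ingredients are in place, Fatou's lemma closes the argument.
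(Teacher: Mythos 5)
Correct, and essentially the same approach as the paper's: the paper gives no written-out proof of Lemma~\ref{L:uniq}, instead deferring to Shiga's argument (\cite[Theorem~3.1]{shiga04}) with the remark that it relies only on the existence of a Poisson kernel and hence transfers to $\disk^n$. Your proof is a self-contained rendering of exactly that argument --- your monotone circular means $m(r)$ are the Poisson integrals of $v = u\circ\phi$ evaluated at the origin, and your application of Fatou's lemma to $M - v(r\xi)$ (along a sequence $r_k \to \mathbf{1}$, to be pedantic) is the reverse-Fatou step in the Poisson-kernel proof --- so it supplies precisely the details the paper outsources to the reference.
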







We will also need the following lemma which shows that under the hypothesis of
Theorem~\ref{T:fin}, the non-tangential limits of the lift of a non-constant map lie on
the boundary. The proof is essentially the same as the proof of 
\cite[Lemma~2.2]{shiga04}. 

\begin{lemma}\label{L:proper}
  Let $X = X_1 \times \dots \times X_n$ be a product of hyperbolic Riemann
  surfaces of finite type and let $Y := \OM/\Gamma$ be a complex manifold, where 
  $\OM \subset \C^n$ is a bounded domain and $\Gamma$ is a discrete and 
  fixed-point free subgroup of $\textsf{Aut}(\OM)$. Let $f : X \to Y$ be a
  non-constant holomorphic map and let $F : \disk^n \to \OM$ be any lift. Then
  for almost every $\xi \in \mathbb{T}^n, F^*(\xi) \in \bdy \OM$.
\end{lemma}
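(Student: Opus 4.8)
The plan is to assume the conclusion fails and derive a contradiction by showing that the lift $F$ must be constant. First I would fix a universal covering $p : \disk^n \to X$ with deck group $G = G_1 \times \dots \times G_n$, where $X_i = \disk/G_i$ and each $G_i$ is a Fuchsian group of divergence type (legitimate by the remark following Result~\ref{R:div}). Writing $q : \OM \to Y$ for the covering map with deck group $\Gamma$, any lift $F$ satisfies $q \circ F = f \circ p$, and since $p \circ \gamma = p$ for every $\gamma \in G$, the maps $F \circ \gamma$ and $F$ are both lifts of $f \circ p$ through $q$; as $q$ is a regular covering they differ by a deck transformation, which yields an equivariance relation $F \circ \gamma = \rho(\gamma) \circ F$ for a homomorphism $\rho : G \to \Gamma$. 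Because $\OM$ is bounded, $F$ is a bounded holomorphic mapping, so by Result~\ref{R:fatou} the non-tangential boundary value $F^*(\xi)$ exists for almost every $\xi \in \mathbb{T}^n$ and necessarily lies in $\overline{\OM}$. It therefore suffices to prove that the set $E := \{\xi \in \mathbb{T}^n : F^*(\xi) \in \OM\}$ has measure zero.

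Second, suppose for contradiction that $E$ has positive measure. For each $i$, Result~\ref{R:div} provides, for almost every $\xi_i \in \bdy\disk$, a sequence $\{g^{(i)}_\nu\} \subset G_i$ with $g^{(i)}_\nu \to \xi_i$ uniformly and non-tangentially on compact subsets of $\disk$. The points $\xi \in \mathbb{T}^n$ all of whose coordinates are \emph{good} in this sense form a set of full measure, so I may choose $\xi_0 = (\xi_{0,1}, \dots, \xi_{0,n}) \in E$ with this property and with $w_0 := F^*(\xi_0) \in \OM$. Setting $\gamma_\nu := (g^{(1)}_\nu, \dots, g^{(n)}_\nu) \in G$, each coordinate converges non-tangentially, so $\gamma_\nu(z) \to \xi_0$ within a product of Stolz regions --- that is, non-tangentially to the distinguished boundary --- for every fixed $z \in \disk^n$. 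Consequently $F(\gamma_\nu(z)) \to F^*(\xi_0) = w_0$ for each fixed $z$.

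Third, I would extract the contradiction. Taking $z = 0$ and using equivariance, $\rho(\gamma_\nu)(F(0)) = F(\gamma_\nu(0)) \to w_0 \in \OM$. But every $\rho(\gamma_\nu)(F(0))$ lies in the fiber $q^{-1}(q(F(0))) = \Gamma \cdot F(0)$, which is a discrete, closed subset of $\OM$ since $q$ is a covering map; a sequence in this fiber converging to the interior point $w_0$ must be eventually constant, and freeness of the $\Gamma$-action then forces $\rho(\gamma_\nu)$ to equal a fixed element $\psi \in \Gamma$ for all large $\nu$. For these $\nu$, equivariance gives $F(\gamma_\nu(z)) = \psi(F(z))$ for all $z$, a quantity independent of $\nu$; letting $\nu \to \infty$ and using the convergence from the previous step yields $\psi(F(z)) = w_0$, hence $F \equiv \psi^{-1}(w_0)$ is constant. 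This contradicts the non-constancy of $f$, for if $F$ were constant then $f \circ p = q \circ F$ would be constant and $f$ itself constant. Therefore $E$ is null and $F^*(\xi) \in \bdy\OM$ for almost every $\xi$.

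I expect the main obstacle to be the second step: verifying that the coordinatewise divergence-type sequences assemble into a genuinely non-tangential approach to the distinguished boundary $\mathbb{T}^n$, so that the scalar Fatou limits supplied by Result~\ref{R:fatou} may legitimately be read off along the orbit points $\gamma_\nu(z)$. The measure-theoretic bookkeeping (intersecting $E$ with the full-measure set of coordinatewise-good points) and the identification of the relevant orbit as a discrete closed fiber of $q$ are routine once the covering-space picture is in place.
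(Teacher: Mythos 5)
Your proof is correct and follows essentially the same route as the paper, which defers to Shiga's Lemma~2.2: lift the map, use the equivariance $F \circ \gamma = \rho(\gamma) \circ F$, run a divergence-type orbit $\gamma_\nu(z) \to \xi_0$ non-tangentially, and use discreteness of the fiber $\Gamma \cdot F(0)$ plus fixed-point freeness to force $F$ to be constant. Your worry about the second step is unfounded within this paper's conventions, since Result~\ref{R:fatou} and the argument in Theorem~\ref{T:rigidity} already use non-tangential approach to $\mathbb{T}^n$ in exactly this coordinatewise (product-of-Stolz-regions) sense.
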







We require one more definition.

\begin{definition}
  Let $D \subset \C^n$ be a bounded domain. We say that $D$ has \emph{essential
  boundary dimension zero}, if there exists a family $\{R_j\}_{j=1}^\infty$ of
  complete pluripolar sets with $R_j \cap \bdy D \neq \emptyset \ \forall j$
  such that $\bdy D \setminus \bigcup_{j=1}^\infty R_j$ contains no analtic
  space of positive dimension. 
\end{definition}

\section{Proofs of our main results}\label{S:main}

  In this section, we shall give the proof of our main theorem
  (Theorem~\ref{T:fin}). We first  prove a rigidity theorem for holomorphic
  mappings (Theorem~\ref{T:rigidity}). This rigidity theorem is an extension of
  a rigdity result used by Imayoshi to prove Result~\ref{R:imayoshi}.

  \begin{theorem} \label{T:rigidity} Let $X := X_1 \times \dots \times X_n$ be
    a product of hyperbolic Riemann surfaces of finite type and let $Y = \OM/\Gamma$
    be an $m$-dimensional complex manifold where $\OM \subset \C^m$ is a
    bounded domain and $\Gamma$ is fixed-point-free discrete
    subgroup of $\textsf{Aut}(\OM)$. Let $\phi, \psi :X \to Y$ be
    holomorphic mappings such that $\phi_* = \psi_*$. Then 
    \begin{enumerate}
      \item If $\phi$ (or $\psi$) is dominant, then $\phi = \psi$.
      \item  If $\phi$ (or $\psi$) is non-constant and $\OM$ has essential
      boundary dimension zero, then $\phi = \psi$.
    \end{enumerate}
  \end{theorem}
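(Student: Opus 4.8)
The plan is to pass to the defining covers and reduce the statement to an equality of non-tangential boundary values. Write each factor as $X_i = \disk/G_i$ with $G_i$ a Fuchsian group of divergence type (Result~\ref{R:div} and the remark following it), so that $\disk^n \to X$ is the universal covering with deck group $G = G_1 \times \dots \times G_n$, and let $\OM \to Y$ be the covering with deck group $\Gamma$. Since $\disk^n$ is simply connected, $\phi$ and $\psi$ lift to holomorphic maps $\Phi, \Psi : \disk^n \to \OM$, and the hypothesis $\phi_* = \psi_*$ lets me normalise the lifts (a routine base-point adjustment, absorbing an element of $\Gamma$) so that both are equivariant with respect to \emph{one and the same} homomorphism $\rho : G \to \Gamma$; that is, $\Phi \circ g = \rho(g) \circ \Phi$ and $\Psi \circ g = \rho(g)\circ\Psi$ for every $g \in G$. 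As $\OM$ is bounded, $\Phi$ and $\Psi$ are bounded holomorphic maps, so by the uniqueness part of Result~\ref{R:fatou} it suffices to prove that $\Phi^* = \Psi^*$ on a set of positive measure in $\mathbb{T}^n$: this then forces $\Phi \equiv \Psi$, whence $\phi = \psi$.

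To compare boundary values I would exploit divergence type. For almost every $\xi = (\xi_1,\dots,\xi_n) \in \mathbb{T}^n$, Result~\ref{R:div}(3) applied in each factor (together with Fubini) produces a sequence $g_\nu \in G$ with $g_\nu(z) \to \xi$ non-tangentially, uniformly on compacta of $\disk^n$. Combining equivariance with the existence of non-tangential boundary values gives, for each fixed $z$,
\[
  \rho(g_\nu)(\Phi(z)) = \Phi(g_\nu(z)) \lrarw \Phi^*(\xi), \qquad
  \rho(g_\nu)(\Psi(z)) = \Psi(g_\nu(z)) \lrarw \Psi^*(\xi).
\]
I then feed $\{\rho(g_\nu)\} \subset \textsf{Aut}(\OM)$ into the normal families machinery (Result~\ref{R:taut}, Lemma~\ref{L:normal}): a subsequence either converges locally uniformly to some $h:\OM\to\OM$ or is compactly divergent. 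When $\phi$ is non-constant, Lemma~\ref{L:proper} places $\Phi^*(\xi)$ on $\bdy\OM$, so an automorphism sequence whose values at the interior point $\Phi(z)$ tend to $\bdy\OM$ cannot have an interior-valued limit (H.~Cartan); hence $\rho(g_\nu) \to h$ with $h(\OM)\subset\bdy\OM$. Passing to the limit yields the key identities $\Phi^*(\xi) = h(\Phi(z))$ and $\Psi^*(\xi) = h(\Psi(z))$ for all $z$, so $h$ is constant (equal to $\Phi^*(\xi)$, resp.\ $\Psi^*(\xi)$) on $\Phi(\disk^n)$, resp.\ $\Psi(\disk^n)$. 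This already settles part (1): if $\phi$ (say) is dominant then $\Phi(\disk^n)$ has nonempty interior, so a holomorphic $h$ constant there is constant on all of $\OM$ by the identity theorem, forcing $\Psi^*(\xi) = h(\Psi(z)) = \Phi^*(\xi)$ for almost every $\xi$; thus $\Phi^* = \Psi^*$ a.e.\ and Result~\ref{R:fatou} concludes.

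For part (2), where $\phi$ is only non-constant, $\Phi(\disk^n)$ need not be open and $h$ need not be globally constant, so I would argue by contradiction. If $\Phi \not\equiv \Psi$ then $\Phi^*\neq\Psi^*$ off a null set, so for a.e.\ $\xi$ the limit $h = h_\xi$ is non-constant and its image $V_\xi := h_\xi(\OM)$ is a positive-dimensional analytic subset of $\bdy\OM$. The essential-boundary-dimension-zero hypothesis supplies complete pluripolar sets $\{R_j\}$ with $\bdy\OM \setminus \bigcup_j R_j$ carrying no positive-dimensional analytic space. The aim is to deduce that the distinguished boundary value $\Phi^*(\xi) = h_\xi(\Phi(z)) \in V_\xi$ lies in $\bigcup_j R_j$ on a set of positive measure; countable additivity then places $\Phi^*(\xi)$ in a single $R_{j_0}$ on a positive-measure set, whereupon Lemma~\ref{L:uniq} forces $\Phi(\disk^n) \subset R_{j_0} \subset \Cm\setminus\OM$, contradicting $\Phi(\disk^n)\subset\OM$.

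The hard part is exactly this last step: upgrading ``$V_\xi$ is a positive-dimensional analytic variety in $\bdy\OM$'' to ``the particular point $\Phi^*(\xi)$ lies in some $R_j$''. Merely knowing that $V_\xi$ meets $\bigcup_j R_j$ is insufficient, since $h_\xi(\Phi(z))$ could a priori sit in the complement. My plan is to show that the \emph{entire} irreducible variety $V_\xi$ is swallowed by $\bigcup_j R_j$ — and then, via a Baire-category argument using the plurisubharmonicity of the defining potentials and the irreducibility of $V_\xi$, that it lies in a single $R_{j_0}$ — by combining the essential-boundary-dimension hypothesis with the rigid structure of $h_\xi$ as a limit of automorphisms. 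This is the sole genuinely delicate point; the covering-space reduction, the divergence-type limit, and the two invocations of the Fatou--Riesz circle (Result~\ref{R:fatou} and Lemma~\ref{L:uniq}) are the routine scaffolding.
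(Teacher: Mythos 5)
Your part~(1) is correct and is essentially the paper's own argument: lift to $\disk^n \to \OM$, use divergence type to produce $g_\nu$ with $g_\nu(z)\to\xi$ non-tangentially, extract a locally uniform limit $h$ of $\rho(g_\nu)$, observe that $h\circ\Phi \equiv \Phi^*(\xi)$ is constant on a set with non-empty interior (dominance), conclude $h$ is globally constant by the identity theorem, and finish with the uniqueness part of Result~\ref{R:fatou}. One small correction there: the compactness you need is just Montel's theorem for the bounded domain $\OM$; you cannot invoke Result~\ref{R:taut} or Lemma~\ref{L:normal}, since in Theorem~\ref{T:rigidity} $\OM$ is \emph{not} assumed complete hyperbolic and $Y$ is not assumed geometrically finite (and Lemma~\ref{L:normal} concerns sequences in $\hol_*(X,Y)$, not sequences of automorphisms of $\OM$).

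The genuine gap is in part~(2), and you have flagged it yourself: the entire weight of the essential-boundary-dimension hypothesis is deferred to a ``plan'' (swallow the whole variety $V_\xi$ into $\bigcup_j R_j$, then into a single $R_{j_0}$ by a Baire-category argument), which is never carried out. Moreover the plan is misdirected: you do not need to localize the image $V_\xi = h_\xi(\OM)$ inside one $R_{j_0}$ at all. The hypothesis is used in the operative form that the image of \emph{any} non-constant holomorphic map into $\bdy\OM$ lies entirely inside $\bigcup_{j}R_j$; granting this, the specific point $\Phi^*(\xi) = h_\xi(\Phi(z))$ automatically lies in $\bigcup_j R_j$ for a.e.\ $\xi$ at which $h_\xi$ is non-constant. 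The countable decomposition is then applied on the torus, where you have a measure, not on the variety: the sets $E_j := \{\xi \in \mathbb{T}^n : \Phi^*(\xi)\in R_j\}$ cover a set of positive measure, so some single $E_{j_0}$ has positive measure, and Lemma~\ref{L:uniq} then forces $\Phi(\disk^n)\subset R_{j_0}\subset \C^m\setminus\OM$ --- contradicting $\Phi(\disk^n)\subset\OM$. This measure-theoretic pigeonhole over $\xi$, rather than any category argument over $h_\xi(\OM)$, is what closes the proof; without it (or an actual execution of your stronger plan), your part~(2) is incomplete exactly where the hypothesis of essential boundary dimension zero must do its work.
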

  \begin{remark}
    The above theorem is also true when $X = \Ball^n/G$ (see Section~\ref
    {S:app}). Our proof works \emph{mutatis mutandis}. See also \cite[Theorem~1.1]{shiga04}.
  \end{remark}
    
  \begin{proof} Let $G_i$ be the Fuchsian group of divergence type such that
    $X_i =
    \disk/G_i$. Then $X = \disk^n/G$ where $G := \bigoplus_{i=1}^n G_i$. Let
    $\widetilde{\phi}, \widetilde{\psi} : \disk^n \to \OM$ be lifts of
    $\phi$ and $\psi$, respectively. As $\disk^n$ is simply-connected, the group
    $G$ and $\pi_1(X)$ are isomorphic.  Fix a point $y\in Y$ and a point $\tilde{y}\in \OM$ that gets mapped to $y$ under the quotient map.  Then, given an element $[\delta] \in \pi_1(Y,y)$ there is an element
    $\gamma \in \Gamma$ that takes $\tilde{y}$ to the end point of the lift of $\delta$ starting at $\tilde{y}$.  This element is independent of the choice of
    representative of $[\delta]$. It is easy to
    check that, in this way, we get a homomorphism from $\pi_1(Y)$ to
    $\Gamma$. Thus, given any map from $X$ to $Y$, by composing, we get a
    homomorphism from $G$ to $\Gamma$. The hypothesis that $\phi_* =
    \psi_*$ implies that both mappings $\phi$ and $\psi$ induce the same
    homomorphism from $G$ to $\Gamma$. Let $\chi$ be this induced homomorphism.

    It follows that
    \begin{align} \label{E:hom1}
      \widetilde{\phi} \circ g &= \chi(g) \circ \widetilde{\phi}\\
      \widetilde{\psi} \circ g &= \chi(g) \circ \widetilde{\psi},
      \label{E:hom2}
    \end{align} for each $g \in G$.

    As each $G_i$ is of divergence type, by Result~\ref{R:div}, for almost 
    every $\xi \in \mathbb{T}^n$, we
    can find a sequence $g_{i,k} \in G_i$ such that $g_{i,k}(z) \to \xi_i$ non-tangentially
    on compact subsets of $\disk$. Let $g_k := (g_{1,k},
    g_{2,k},\dots,g_{n,k})$. Then $g_k(z) \to \xi$ non-tangentially and
    uniformly on compact subsets of $\disk^n$.

    Now, as $\OM$ is a bounded domain, Fatou's theorem for the polydisk 
    (Result~\ref{R:fatou}) implies that for almost every $\xi
    \in \mathbb{T}^n$, each component function of $\widetilde{\phi}$ and
    $\widetilde{\psi}$ converge as $z \to \xi$ non-tangentially. We define
    $\tau(\xi)$ and $\eta(\xi)$ to be the non-tangential limits of
    $\widetilde{\phi}$ and $\widetilde{\psi}$, respectively, as $z \to \xi$
    (the functions $\tau$ and $\psi$ are defined almost-everywhere on $\mathbb
    {T}^n$). This, in combination with arguments in the previous paragraph,
    shows that for almost every $\xi \in \mathbb{T}^n$, we can find a sequence
    $g_k \in G$ such that
    \begin{align*}
      \lim_{k \to \infty} \widetilde{\phi} \circ g_k(z) &= \tau(\xi) \\
      \lim_{k \to \infty} \widetilde{\psi} \circ g_k(z) &= \eta(\xi),
    \end{align*} 
    for each $z \in \disk^n$. As each $\chi(g_k) \in
    \textsf{Aut}({\OM})$, and as $\OM$ is bounded, we may use Montel's theorem
    and assume without loss of generality that $\chi(g_k)$ converges uniformly
    on compact subsets of $\OM$ to a holomorphic map $A_\xi: \OM \to \C^n$.
    Using
    \eqref{E:hom1} and \eqref{E:hom2}, we have $A_\xi \circ \widetilde{\phi}
    (z) = \tau(\xi)$ and $A_\xi  \circ \widetilde{\psi}(z) = \eta(\xi)$,
    whenever $z \in \disk^n$. 

    Now assume that $\phi$ is dominant. This implies that $\widetilde{\phi}$
    is dominant as well. Therefore, we can find a polydisk $P_\xi \Subset
    \disk^n$  such that $\widetilde{\phi}(P_\xi)$ has an interior point. This
    forces $A_\xi$ to be a constant map by the identity principle. Hence
    $\tau$ and $\eta$ agree almost everywhere on the torus which means that 
    $\widetilde{\phi} = \widetilde{\psi}$ (by Fatou's theorem) and 
    hence $\phi = \psi$ as claimed.

    Now assume that the essential boundary dimension of $\OM$ is $0$ and that
    $\phi$ is non-constant.
    This implies that $\widetilde{\phi}$ is non-constant as well. 
    Each $\chi(g_k)$ is an automorphism of the bounded domain $\OM$ that
    converges in the compact-open topology to the map $A_\xi: \OM \to \overline
    {\OM}$. Consequently, by Cartan's theorem for biholomorphisms, either
    $A_\xi$ is
    an automorphism or the image of $A_\xi$ is fully contained in $\bdy \OM$.
    However, as $\widetilde{\phi}$ is non-constant, $\widetilde{\phi}(P_\xi)$
    cannot be a singleton set (by the identity principle), where $P_\xi \Subset
    \disk^n$ is any polydisk. Hence $A_\xi$ is not injective as 
    $A_\xi  \circ \widetilde{\phi}(z)$ is constant and consequently $A_\xi
    (\OM) \subset \bdy \OM$. 

    We have shown that for almost every $\xi \in \mathbb{T}^n$, we have $A_\xi
    (\OM) \subset \bdy \OM$. We claim that for almost every $\xi \in \mathbb
    {T}^n$, $A_\xi$ is constant. Suppose not. 
    Since the essential boundary dimension of $\OM$ is $0$, we can find 
    countably many complete 
    pluripolar sets $R_j$ of $\C^n \setminus \OM$ such that the image of any
    non-constant holomorphic function into $\bdy \OM$ must lie fully in
    $\bigcup_{j=1}^\infty R_j$. Therefore, we can find a set $E \subset
    \mathbb{T}^n$ of positive measure with the property that $A_\xi(\OM)
    \subset R_{j_0}$ for some $j_0 \in \Z_+$ and $\forall \xi \in E$. This means
    that for a set of of positive measure, the non-tangential limits of
    $\widetilde
    {\phi}$ lie in a complete pluripolar set and consequently $\widetilde{\phi}
    (\disk^n)$ must be entirely contained in the same pluripolar set by
    Lemma~\ref{L:uniq}. This is not possible. Consequently,
    $A_\xi$ is a constant for almost every $\xi \in \mathbb{T}^n$. This means
    that the non-tangential limits of $\widetilde{\phi}$ and $\widetilde{\psi}$
    coincide
    almost everywhere on $\mathbb{T}^n$ proving that $\widetilde{\phi} =
    \widetilde{\psi}$ by Fatou's theorem. Hence $\phi = \psi$.

  \end{proof}

  The next lemma clarifies the role of geometrical finiteness of $\OM$.

  \begin{lemma}\label{L:normal}
    Let $X := X_1 \times \dots \times X_n$ be
    a product of hyperbolic Riemann surfaces of finite type and let $Y = \OM/\Gamma$
    be an $m$-dimensional geometrically finite complex manifold where $\OM
    \subset \C^m$ is a bounded complete hyperbolic domain and $\Gamma$ is fixed-point-free discrete
    subgroup of $\textsf{Aut}(\OM)$.
    Then any sequence $\{f_n \in \hol_*(X,Y)\}$
    converges in the compact-open topology to a map in $\hol(X,Y)$.
  \end{lemma}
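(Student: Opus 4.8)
The plan is to establish that $\hol_*(X,Y)$ is relatively compact in $\smoo(X,Y)$ by ruling out compact divergence, and then to show that no limit can be constant, so that the limit actually lies in $\hol(X,Y)$. Since $\OM$ is complete hyperbolic, $Y = \OM/\Gamma$ is complete hyperbolic by Result~\ref{R:hypcomp}, hence taut by Result~\ref{R:taut}. Therefore $\hol(X,Y)$ is a normal family, and any sequence $\{f_\nu\}\subset\hol_*(X,Y)$ has either a subsequence converging uniformly on compacts to some $f\in\hol(X,Y)$ or a compactly divergent subsequence. The two things I must rule out are (a) compact divergence and (b) the limit $f$ being constant; the geometric finiteness of $Y$ is exactly what controls both.

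First I would analyze compact divergence. If $\{f_\nu\}$ diverged compactly, then for any compact $K\subset X$ and any compact $L\subset Y$ we would have $f_\nu(K)\cap L=\emptyset$ for $\nu$ large, which means the images $f_\nu(K)$ eventually escape into the ends of $Y$. Since $Y$ is geometrically finite it has only finitely many ends, so after passing to a subsequence we may assume the images all run into a single fixed parabolic end $U_1\supset U_2\supset\cdots$. I would then lift: fixing a lift $F_\nu:\disk^n\to\OM$ of $f_\nu$, the parabolic end condition says a component $\widetilde U_{N_0}$ of $\pi^{-1}(U_{N_0})$ has $\overline{\widetilde U}_{N_0}\cap\bdy\OM$ contained in a countable union of complete pluripolar sets $\bigcup_j R_j\subset\C^m\setminus\OM$. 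The point is that the $F_\nu$ being forced into this end clusters their non-tangential boundary values onto this pluripolar set. Using Fatou's theorem (Result~\ref{R:fatou}) and Lemma~\ref{L:uniq}, I would derive a contradiction: the $F_\nu$ would have boundary values in a complete pluripolar set on a positive-measure subset of $\mathbb{T}^n$, forcing $F_\nu(\disk^n)\subset R_{j_0}\subset\C^m\setminus\OM$, which is absurd since $F_\nu$ maps into $\OM$. This rules out compact divergence, so a subsequence converges to some $f\in\hol(X,Y)$.

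Next I would show the limit $f$ is non-constant. Here I expect the main obstacle to lie, because uniform-on-compacts limits of non-constant maps can perfectly well be constant in general, so the argument must again exploit geometric finiteness rather than any soft compactness. The strategy is to argue that if $f$ were constant, equal to some point $y_0\in Y$, then for large $\nu$ the maps $f_\nu$ would have images concentrating near $y_0$; I would lift to $\OM$ and use Lemma~\ref{L:proper}, which asserts that for a \emph{non-constant} map the non-tangential boundary values of any lift $F_\nu$ lie in $\bdy\OM$ for almost every $\xi\in\mathbb{T}^n$. I would then combine this boundary behavior with the interior convergence $F_\nu\to\widetilde f\equiv\tilde y_0\in\OM$ (a constant) on compacts to obtain a contradiction, again channeled through the Fatou uniqueness statement and Lemma~\ref{L:uniq} so that the limiting boundary values cannot reconcile an interior constant value in $\OM$ with boundary values forced onto $\bdy\OM$ or onto the pluripolar sets. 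Passing through the lifts and keeping track of the $\Gamma$-action so that a genuine convergent lift exists is the delicate bookkeeping step.

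Concretely, the cleanest route for non-constancy is to unify both parts: whether we fear compact divergence or a constant limit, in each case some lift $F_\nu$ (after composing with suitable deck transformations $\gamma_\nu\in\Gamma$ to keep a base point in a fixed compact piece of $\OM$) has non-tangential boundary values trapped in $\bigcup_j R_j$ on a set of positive measure, and Lemma~\ref{L:uniq} then forces $F_\nu(\disk^n)$ into a single pluripolar set disjoint from $\OM$, the desired contradiction. Thus I would set up the normalization of lifts first, prove the boundary-value dichotomy using Lemmas~\ref{L:proper} and \ref{L:uniq} together with Result~\ref{R:fatou}, and conclude that neither compact divergence nor constancy of the limit can occur. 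Tautness then gives a convergent subsequence and the two exclusions upgrade its limit to an element of $\hol(X,Y)$, completing the proof. The essential difficulty, and the part I would write most carefully, is justifying that the boundary values genuinely accumulate on the pluripolar sets $R_j$ associated with the single end, which requires correctly transporting the escape-to-an-end phenomenon downstairs in $Y$ into a boundary-value statement for the normalized lifts upstairs in $\OM$.
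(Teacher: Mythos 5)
Your skeleton agrees with the paper's: complete hyperbolicity of $\OM$ gives tautness of $Y$ (Results~\ref{R:hypcomp} and \ref{R:taut}), so every sequence in $\hol_*(X,Y)$ admits a compactly convergent or compactly divergent subsequence, and the work is to rule out compact divergence by trapping boundary values of a lift in the pluripolar sets attached to a parabolic end and invoking Lemma~\ref{L:uniq}. But the step you explicitly defer --- ``transporting the escape-to-an-end phenomenon downstairs in $Y$ into a boundary-value statement for the lifts upstairs in $\OM$'' --- is not a technicality to be written carefully later; it is the heart of the proof, and the two ideas needed to carry it out are absent from your proposal. First, the compact set must be chosen as a \emph{connected} deformation retract $K$ of $X$ carrying $\pi_1(X)$ (in the paper, a product of compact Riemann surfaces minus small disks about the punctures); compact divergence plus connectedness forces $f_\nu(K)\subset V$ for a \emph{single} parabolic end $V$, and then the equivariance $F\circ g=\chi(g)\circ F$ of the lift puts the entire $G$-orbit of the base point into one component $V'$ of $\pi^{-1}(V)$: for every $g\in G$, the point $g(0)$ is the endpoint of the lift of a loop in $K$, whose image under $F$ is connected, meets $V'$, and lies in $\pi^{-1}(V)$, so $F(g(0))=\chi(g)(F(0))\in V'$. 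Second --- and this is what makes non-tangential limits enter at all --- the groups $G_i$ are of \emph{divergence type} (Result~\ref{R:div}), so for almost every $\xi\in\mathbb{T}^n$ there exist $g_k\in G$ with $g_k(0)\to\xi$ non-tangentially; combined with Result~\ref{R:fatou} and Lemma~\ref{L:proper}, this gives $F^*(\xi)=\lim_k\chi(g_k)(F(0))\in\overline{V'}\cap\bdy\OM\subset\bigcup_j R_j$ for a.e.\ $\xi$, hence a positive-measure set of boundary values in a single $R_{j_0}$, and Lemma~\ref{L:uniq} yields the contradiction. Without divergence type, ``the images escape into an end'' gives no control whatsoever over non-tangential limits, and your alternative device of composing with deck transformations $\gamma_\nu\in\Gamma$ to keep base points in a compact piece of $\OM$ does not produce this control either (it is, if anything, the opposite normalization: the paper fixes the lift so that $F(0)$ lies \emph{in} $V'$). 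So as written, the proposal does not close.

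A secondary point: the final portion of your plan, proving that the limit $f$ is non-constant, is not part of the lemma --- the conclusion is only that the limit lies in $\hol(X,Y)$, constants allowed, and the later finiteness argument in Theorem~\ref{T:fin} does not need more. Moreover, the contradiction you sketch there (interior convergence to a constant of $\OM$ versus boundary values on $\bdy\OM$ from Lemma~\ref{L:proper}) is not by itself a contradiction: on the disk, the maps $z\mapsto z^\nu$ are non-constant, have unimodular boundary values, and converge uniformly on compacts to an interior point. Whatever obstruction exists must again come from the equivariance constraints, not from this soft comparison; fortunately the lemma does not require it.
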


  \begin{proof}
    As in the proof of Theorem~\ref{T:rigidity}, we write $X$ as $\disk^n/G$.

    By Result~\ref{R:taut}, complete hyperbolic manifolds are taut. Therefore any
    sequence $\{f_\nu \in \hol_*(X,Y)\}$ must be either have a compactly
    convergent or compactly divergent subsequence. 

    Suppose there is a sequence $\{f_\nu \in \hol_*(X,Y)\}$ that is compactly
    divergent.  We 
    view each $X_i$ as being an open subset of a compact Riemann surface $R_i$. 
    For each point $x \in R_i \setminus X_i$, let $D_x$ be a small disk in $R_i$
    with $x$ as the centre, chosen in such a way that if $x_1,x_2 \in R_i 
    \setminus X_i$ are distinct
    points, then the closures of $D_{x_1}$ and $D_{x_2}$ are disjoint. Let $K
    := \bigtimes_{i = 1}^n R_i \setminus \bigcup_{x \in R_i \setminus X_i}
    D_x$. As $K$ is a deformation retract of $X$, $\pi_1(K) = \pi_1(X)$. Any
    curve $\gamma \subset X$ can be homotoped to a curve that lies entirely in
    $K$. For large $\nu$,
    by compact divergence and the fact that $K$ is connected, we may assume that
    $f_\nu(K) \subset V$ where $V \subset Y$ is one of the finitely many
    parabolic ends of $Y$. Let $V'$ be a connected component of a pre-image
    of $V$ under the covering map such
    that $\overline{V'} \cap \bdy \OM \subset \bigcup_{i = 1}^\infty R_i$ where
    each $R_j$ is a complete pluripolar set in $\C^m \setminus \OM$. 

    Fix $\nu$ large and let $f := f_\nu$ and let $\xi: G \to \Gamma$ be the
    induced homomorphism. Denote by $F$ the lift of $f$ with the property
    that $F(0) \in V'$. Let $p_0 \in M$ be the image of $0$
    under the covering map. We may assume that $p_0 \in K$. Let $g \in G$ and
    let $L$ be a loop based at $p_0$ contained entirely in $K$ that corresponds
    to $g$. Denote the lift of $L$ based at $0$ by $\widetilde{L}$. Then $F
    (\widetilde{L}) \subset V'$. As $F(g(0)) = \chi(g)(F(0))$ and the endpoint
    of
    $\widetilde{L}$ is $g(0)$, it follows that $\chi(g)(F(0)) \in V'$.
    For almost every $\xi \in \mathbb{T}^n$, we can find a sequence $g_\nu \in
    G$
    such that $g_\nu(0) \to \xi$ non-tangentially. We assume that $F^*(\xi)$
    exists
    and that $F^*(\xi) \in \bdy \OM$. But $F(g_\nu(0)) = \chi(g_\nu)
    (F(0)) \in V'$. Hence $F^*(\xi) \in \overline{V'} \cap \bdy \OM \subset
    \bigcup_{i = 1}^\infty R_i$. This force $F$ to be a constant by 
    Lemma~\ref{L:uniq}.
  \end{proof}
  
  \noindent \textbf{Proof of Theorem~\ref{T:fin}:} Let $\{ f_k \} \subseteq
    \holdom(X,N)$ be a sequence of distinct dominant holomorphic mappings. As
    $N$ is tautly embedded in $Y$, we may assume that the sequence $\{f_k\}$
    converges in the compact-open topology to a map $f: X \to Y$.

    As in the proof of Lemma~\ref{L:normal}, we can find a connected and
    compact set $K$ such that finitely many closed loops contained in $K$
    generate $\pi_1(X)$. Now, as $\{f_k\}$ converges uniformly on $K$ to $f$, it
    is easy to see that for suitably large $k$, the $f_k(\gamma)$ and $f
    (\gamma)$ are homotopic where $\gamma$ is curve whose image lies entirely in
    $K$. Thus all the $f_k$'s induce the same
    homomorphism on $\pi_1(X)$. Consequently, by Theorem~\ref{T:rigidity}, for
    large $k$, all the $f_k$'s are equal, which is a contradiction.

    We prove the
    last part. Suppose $\{f_k \} \subseteq \hol_*(X,Y)$ is a sequence of
    distinct holomorphic mappings. By Lemma~\ref{L:normal}, we may, by passing
    to a subsequence, if needed, assume that the $f_k$'s converge uniformly on
    compacts. The previous argument now delivers the proof.
    
    Part 2 of this theorem follows by similar arguments.
  \qed

  \section{Applications}\label{S:app}

  We will now highlight a few applications of our results. The first application
  is rather trivial but nevertheless interesting.

  \begin{corollary}
    Let $X := X_1 \times \dots \times X_n$ be
    a product of hyperbolic Riemann surfaces of finite type and let $Y = \OM/\Gamma$
    be an $m$-dimensional compact complex manifold where $\OM
    \subset \C^m$ is a bounded domain and $\Gamma$ is fixed-point-free discrete
    subgroup of $\textsf{Aut}(\OM)$ Then $\holdom(X,N)$
    is a finite set where $N \subset Y$ is any complex submanifold (including
    $Y$ itself).
  \end{corollary}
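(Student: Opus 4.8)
The plan is to deduce this corollary directly from part~(1) of Theorem~\ref{T:fin}. Since the hypotheses on $X$ and on the representation $Y=\OM/\Gamma$ are already in place, the only thing that needs to be checked is that $N$ is a \emph{tautly embedded} complex submanifold of $Y$; once this is verified, Theorem~\ref{T:fin}(1) immediately yields the finiteness of $\holdom(X,N)$, with the case $N=Y$ included.

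First I would record that $Y$ is complete hyperbolic. Indeed, a bounded domain $\OM \subset \Cm$ is hyperbolic, and since the quotient map $\OM \to Y$ is a covering, Result~\ref{R:hypcomp} shows that $Y$ is hyperbolic as well. A compact hyperbolic manifold is automatically complete hyperbolic: the Kobayashi distance induces the manifold topology, so $(Y,K_Y)$ is a compact metric space and is therefore complete. Consequently $Y$ is taut by Result~\ref{R:taut}.

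The heart of the matter is to show that $N$ is tautly embedded in $Y$. Here I would use that $N$, being a closed complex submanifold of the compact manifold $Y$, is itself compact, and in particular relatively compact in $Y$. Given any sequence $\{f_k\} \subset \hol(X,N)$, regard it as a sequence in $\hol(X,Y)$ via the inclusion $N \hookrightarrow Y$. Tautness of $Y$ furnishes a subsequence that is either compactly convergent or compactly divergent; but compact divergence is impossible, since no sequence of maps can eventually leave every compact subset of the compact manifold $Y$. Hence every sequence in $\hol(X,N)$ has a subsequence converging in the compact-open topology to a map $X \to Y$ (which in fact lands in $\overline{N}=N$), and this is precisely the assertion that $N$ is tautly embedded in $Y$.

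With tautness of the embedding established, applying Theorem~\ref{T:fin}(1) completes the argument. I expect the tautly-embedded verification to be the only substantive step, and the key input throughout is the compactness of $Y$, which simultaneously forces completeness (hence tautness) and rules out compact divergence.
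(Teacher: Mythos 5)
Your proposal is correct and follows essentially the same route as the paper's own proof: both deduce that the compact hyperbolic manifold $Y$ is complete hyperbolic and hence taut (Result~\ref{R:taut}), rule out compactly divergent subsequences using the compactness of $Y$ to conclude that $N$ is tautly embedded, and then invoke Theorem~\ref{T:fin}(1). Your write-up merely fills in the same steps in more detail (e.g.\ why compact hyperbolic implies complete hyperbolic), so there is nothing substantively different to compare.
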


  \begin{proof}
    As $Y$ is compact hyperbolic, it is complete hyperbolic and therefore taut.
    Any sequence of holomorphic maps $f_k : X \to Y$ cannot have a
    compactly divergent subseqeunce as $Y$ is compact. Thus $N$ is tautly
    embedded in $Y$ and the corollary follows from Theorem~\ref{T:fin}.
  \end{proof}
  
  Our next application is an extension of Theorem~\ref{R:imayoshi} to maps
  between products of Riemann surfaces.

  \begin{corollary}
  	Let $X = X_1 \times \dots \times X_n$ be a product of hyperbolic Riemann
    surfaces of
  	finite type and let $Y = Y_1 \times \dots \times Y_m$ be a product of
  	hyperbolic Riemann surfaces each of which can be embedded inside a
  	compact Riemman surface. Then $\hol_\textsf{dom}(X,Y)$ is a finite set.
  \end{corollary}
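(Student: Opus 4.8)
The plan is to deduce this from the preceding corollary by realizing $Y$ as an open complex submanifold of a \emph{compact} manifold of the same type. Since each $Y_j$ is a hyperbolic Riemann surface, the uniformization theorem lets us write $Y_j = \disk/\Gamma_j$ for a Fuchsian group $\Gamma_j$, so that $Y = \disk^m/\Gamma$ with $\Gamma := \Gamma_1 \times \dots \times \Gamma_m$ a fixed-point-free discrete subgroup of $\textsf{Aut}(\disk^m)$ and $\OM := \disk^m$ a bounded domain. As the polydisk is complete hyperbolic (a finite product of complete hyperbolic disks), $\OM$ is complete hyperbolic, and hence so is $Y$ by Result~\ref{R:hypcomp}; in particular $Y$ is taut by Result~\ref{R:taut}. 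This places us squarely in the setting of Theorem~\ref{T:fin}.

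First I would fix, for each $j$, an embedding of $Y_j$ into a compact Riemann surface $\hat Y_j$, arranged to have genus at least $2$, so that $\hat Y_j = \disk/\Lambda_j$ is itself hyperbolic. Then $\hat Y := \hat Y_1 \times \dots \times \hat Y_m = \disk^m/\Lambda$ is a \emph{compact} complex manifold whose universal cover is the bounded domain $\disk^m$, and $Y = Y_1 \times \dots \times Y_m$ is an open complex submanifold of $\hat Y$. Because $\hat Y$ is compact hyperbolic it is taut, and no sequence in $\hol(X,\hat Y)$ can be compactly divergent; hence every sequence in $\holdom(X,Y) \subset \hol(X,\hat Y)$ has a convergent subsequence in $\hol(X,\hat Y)$, i.e.\ $Y$ is tautly embedded in $\hat Y$. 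Theorem~\ref{T:fin}(1), applied exactly as in the preceding corollary with $N = Y$, then forces $\holdom(X,Y) = \holdom(X,N)$ to be finite.

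The step I expect to be the main obstacle is the genus adjustment above: the hypothesis only furnishes \emph{some} compact Riemann surface $\hat Y_j$, which may have genus $0$ or $1$, whose universal cover ($\widehat{\C}$ or $\C$) is not bounded, so $\hat Y$ need not be hyperbolic. When the complement $\hat Y_j \setminus Y_j$ has nonempty interior one can raise the genus by a conformal handle-attachment performed entirely inside the complement, leaving the complex structure of $Y_j$ untouched; but this fails for finite-type factors with point-like ends (e.g.\ the thrice-punctured sphere), which admit no holomorphic embedding into a higher-genus compact surface. For such factors I would instead invoke Theorem~\ref{T:fin}(2) over the complete hyperbolic polydisk $\disk^m$ itself: a finite-type surface is geometrically finite by Remark~\ref{Rem:RSFTgeomfin}, its cusps lifting to a countable set of parabolic fixed points on $\bdy\disk$, which is polar and hence pluripolar. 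The genuinely delicate point is then to verify that the resulting product $Y$ still has only finitely many ends and that each is parabolic, i.e.\ that the closure of a suitable lift of each end meets $\bdy(\disk^m)$ in a countable union of complete pluripolar sets, so that the hypotheses of Theorem~\ref{T:fin}(2) are met; this end analysis, rather than the rigidity input (which is entirely supplied by Theorem~\ref{T:fin}), is where the real work lies.
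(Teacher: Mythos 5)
Your proposal contains a genuine gap, and it is exactly at the point where you place the ``real work.'' Your fallback route assumes each factor $Y_j$ is of finite type (you speak of its cusps lifting to parabolic fixed points and invoke Remark~\ref{Rem:RSFTgeomfin}), but the corollary's hypothesis is much weaker: $Y_j$ is merely a hyperbolic Riemann surface admitting a holomorphic embedding into some compact surface $S_j$. The excised set $E_j = S_j \setminus Y_j$ can be infinite --- a Cantor set, a closed disk, any closed set --- so $Y_j$ may have infinitely many (even uncountably many) ends and infinitely generated fundamental group. Such a $Y_j$ is not of finite type and not geometrically finite, so Theorem~\ref{T:fin}(2) cannot be applied with target $Y$ at all. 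Thus, after your (correct) observation that the compactification route fails for factors like the thrice-punctured sphere, the proposal has no argument that covers the stated generality; and even in the finite-type case you defer, rather than carry out, the verification that the product target satisfies the geometric-finiteness hypothesis.

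The idea you are missing, which is the whole content of the paper's proof, is to \emph{enlarge} the target rather than compactify it or use it as given. Write $Y_j = S_j \setminus E_j$. Since $Y_j$ is hyperbolic, $E_j$ contains at least three points when $S_j$ has genus zero (at least one point when the genus is one, and no points are needed when the genus is at least two), so one may choose a \emph{finite} subset $E_j' \subset E_j$ such that $N_j := S_j \setminus E_j'$ is hyperbolic of finite type. Then $N := N_1 \times \dots \times N_m$ is a product of hyperbolic Riemann surfaces of finite type containing $Y$ as an open subset. Because $Y$ is open in $N$ and $\dim Y = \dim N$, every dominant holomorphic map $f : X \to Y$ remains dominant as a map into $N$, and distinct maps remain distinct; hence $\holdom(X,Y) \subseteq \holdom(X,N)$, and finiteness follows from Theorem~\ref{T:fin}(2) applied to $N$, whose factors are geometrically finite by Remark~\ref{Rem:RSFTgeomfin}. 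This interpolation $Y \subseteq N \subseteq S_1 \times \dots \times S_m$ disposes simultaneously of the genus obstruction you identified and of any need to analyze the ends of $Y$ itself.
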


  \begin{remark}
    In \cite{janardhanan14}, finite proper holomorphic mappings between
    products of certain hyperbolic Riemann surfaces was studied. The above
    result is a partial extension of the main result there.
  \end{remark}

  \begin{proof}
    We view each $Y_i$ as an open subset of a compact Riemann surface $S_i$.
    Now, $Y_i$ is obtained by excising a set $E_i$ from $S_i$. Choose finite
    sets  $E_i' \subset E_i$ such that $S_i \setminus E_i'$ is hyperbolic. Let
    $N
    := S_1 \setminus E_1' \times \dots \times S_n \setminus E_n'$. Then $N$ is a
    product of hyperbolic Riemann surfaces of finite type. It suffices to show
    that
    $\holdom(X,N)$ is a finite set. But any hyperbolic Riemann surface of finite
    type is geometrically finite. It follows from Theorem~\ref{T:fin} that 
    $\holdom(X,N)$ is a finite set.
  \end{proof}

  We end with an observation about our results. We used three features of the
  representation of the manifold $X$ as $\disk^n/G$ in our results:

  \begin{enumerate}
    \item We have a Fatou--Riesz theorem on $\disk^n$.
    \item For almost every element $\xi \in \mathbb{T}^n$, we have a
    sequence of elements $g_k \in G$ such that $g_k$ converges uniformly and
    non-tangentially on compacts to $\xi$. In other words, $G$ is of divergence
    type.
    \item $G$ (or equivalently $\pi_1(X)$) is finitely generated.
  \end{enumerate}
  
  In view of the above observation, our proof of Theorem~\ref{T:fin}
  works \emph{mutatis mutandis} when $X$ is of the form
  $\Ball^n/G$
  where $G$ is a finitely generated subgroup of $\textsf{Aut}(\Ball^n)$ of 
  divergence type. See \cite{shiga04} for a precise definition of divergence
  type. We can also replace $X$ by $D/G$ where $D$ is any bounded domain and
  group $G \subset \textsf
  {Aut}(D)$ that have the analogues of the three features mentioned above. For
  instance, in the statement of Theorem~\ref{T:fin}, we may replace each
  Riemann surface $X_i$ by a quotient $\Ball^n/G_i$ where each $G_i \subset
  \textsf{Aut}(\Ball^n)$ is a finitely generated subgroup of divergence type. 
  
\bibliographystyle{amsalpha} \bibliography{finiteness}

\providecommand{\bysame}{\leavevmode\hbox to3em{\hrulefill}\thinspace}
\providecommand{\MR}{\relax\ifhmode\unskip\space\fi MR }
\providecommand{\MRhref}[2]{%
  \href{http://www.ams.org/mathscinet-getitem?mr=#1}{#2}
}
\providecommand{\href}[2]{#2}
\begin{thebibliography}{{Ric}63}

\bibitem[dF13]{defranchis13}
M.~de~Franchis, \emph{Un teorema sulle involuzioni irrazionall.}, Rend. Circ.
  Mat. Palermo \textbf{36} (1913), no.~1, 368.

\bibitem[Ima83]{imayoshi83}
Y{\^o}ichi Imayoshi, \emph{Generalizations of de {F}ranchis theorem}, Duke
  Math. J. \textbf{50} (1983), no.~2, 393--408.

\bibitem[Jan14]{janardhanan14}
Jaikrishnan Janardhanan, \emph{Proper holomorphic mappings between hyperbolic
  product manifolds}, Internat. J. Math. \textbf{25} (2014), no.~4, 1450039,
  10.

\bibitem[Kie70]{kiernan70}
Peter Kiernan, \emph{On the relations between taut, tight and hyperbolic
  manifolds}, Bull. Amer. Math. Soc. \textbf{76} (1970), 49--51.

\bibitem[KO75]{kobayashi75}
Shoshichi Kobayashi and Takushiro Ochiai, \emph{Meromorphic mappings onto
  compact complex spaces of general type}, Invent. Math. \textbf{31} (1975),
  no.~1, 7--16.

\bibitem[Kob67]{kobayashi67}
Shoshichi Kobayashi, \emph{Invariant distances on complex manifolds and
  holomorphic mappings}, J. Math. Soc. Japan \textbf{19} (1967), 460--480.

\bibitem[Kob98]{kobayashi1998}
\bysame, \emph{Hyperbolic {C}omplex {S}paces}, Grundlehren der Mathematischen
  Wissenschaften, vol. 318, Springer-Verlag, Berlin, 1998.

\bibitem[Kob05]{kobayashi05}
\bysame, \emph{Hyperbolic manifolds and holomorphic mappings}, second ed.,
  World Scientific Publishing Co. Pte. Ltd., Hackensack, NJ, 2005, An
  introduction.

\bibitem[NS82]{noguchi82}
Junjiro Noguchi and Toshikazu Sunada, \emph{Finiteness of the family of
  rational and meromorphic mappings into algebraic varieties}, Amer. J. Math.
  \textbf{104} (1982), no.~4, 887--900.

\bibitem[{Ric}63]{richards63}
I.~{Richards}, \emph{{On the classification of noncompact surfaces.}}, {Trans.
  Am. Math. Soc.} \textbf{106} (1963), 259--269 (English).

\bibitem[Shi04]{shiga04}
Hiroshige Shiga, \emph{On holomorphic mappings of complex manifolds with ball
  model}, J. Math. Soc. Japan \textbf{56} (2004), no.~4, 1087--1107.

\bibitem[Tsu45]{tsuji45}
Masatsugu Tsuji, \emph{On the boundary value of a bounded analytic function of
  several complex variables}, Proc. Japan Acad. \textbf{21} (1945), 308--312
  (1949).

\bibitem[Tsu75]{tsuji75}
\bysame, \emph{Potential theory in modern function theory}, Chelsea Publishing
  Co., New York, 1975, Reprinting of the 1959 original.

\bibitem[ZL89]{zaldenberg89}
M.~G. Za{\v{l}}denberg and V.~Ya. Lin, \emph{Several complex variables. {III}},
  Encyclopaedia of Mathematical Sciences, vol.~9, Springer-Verlag, Berlin,
  1989.

\end{thebibliography}

\end{document}